\newcommand{\F}{\ensuremath{\mathbb{F}}\xspace}
\newcommand{\N}{\ensuremath{\mathbb{N}}\xspace}
\newcommand{\Z}{\ensuremath{\mathbb{Z}}\xspace}
\newcommand{\proj}{\ensuremath{\mathbb{P}}\xspace}
\theoremstyle{plain}
\newtheorem{theorem}[equation]{Theorem}
\newtheorem{lemma}[equation]{Lemma}
\newtheorem{prop}[equation]{Proposition}
\theoremstyle{definition}
\newtheorem{definition}[equation]{Definition}
\newtheorem{remark}[equation]{Remark}
\numberwithin{equation}{section}
\begin{document}
\title{Vector spaces as unions of proper subspaces}
\author{Apoorva Khare}
\email{apoorva@math.ucr.edu}
\address{Dept.~of Mathematics, Univ.~of California at Riverside,
Riverside, CA 92521}
\subjclass[2000]{15A03}
\date{\today}

\begin{abstract}
In this note, we find a sharp bound for the minimal number (or in
general, indexing set) of subspaces of a fixed (finite) codimension
needed to cover any vector space $V$ over any field. If $V$ is a finite
set, this is related to the problem of partitioning $V$ into subspaces.
\end{abstract}
\maketitle

\section{The main theorem}

Consider the following well-known problem in linear algebra (which is
used, for example, to produce vectors not on root hyperplanes in Lie
theory):\medskip

{\it No vector space over an infinite field is a finite union of proper
subspaces.}\medskip

\noindent The question that we answer in this short note, is:\medskip

{\it Given any vector space $V$ over a ground field $\F$, and $k \in \N$,
what is the smallest number (or in general, indexing set) of proper
subspaces of codimension $k$, whose union is $V$?}\medskip

To state our main result, we need some definitions.

\begin{definition}\hfill
\begin{enumerate}
\item Compare two sets $I,J$ as follows: $J > I$ if there is no
one-to-one map $f : J \to I$. Otherwise $J \leq I$.

\item Given a vector space $V$ over a field $\F$, define $\proj(V)$ to be
the set of lines in $V$; thus, $\proj(V)$ is in bijection with $(V
\setminus \{ 0 \}) / \F^\times$.
\end{enumerate}
\end{definition}

This paper is devoted to proving the following theorem.

\begin{theorem}\label{Tvecunion}
Suppose $V$ is a vector space over a field $\F$, and $I$ is an indexing
set. Also fix $1 \leq k < \dim_\F V$, $k \in \N$.
Then $V$ is a union of ``$I$-many" proper subspaces of codimension at
least $k$, if and only if $I \geq \nu(\F, V, k)$, where
\[ \nu(\F,V,k) :=
  \begin{cases}
    \lceil |\proj(V)| / |\proj(V / \F^k)| \rceil, &\text{if $|V| <
    			\infty$;}\\
    \N,                       &\text{if $|\F| = \dim_\F V = \infty$;}\\
    \F^k \coprod \{ \infty \},                    &\text{otherwise.}\\
  \end{cases} \]
\end{theorem}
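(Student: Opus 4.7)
The proof naturally splits along the three branches of $\nu(\F,V,k)$; within each branch, I would verify sufficiency (an $I$-indexed cover exists) and necessity (any cover forces $|I|\geq|\nu|$) separately. The case $|V|<\infty$ rests on a projective-space count: necessity is the inequality $|\proj V|\leq\sum_{i\in I}|\proj W_i|\leq|I|\cdot|\proj(V/\F^k)|$ (using that codim $\geq k$ forces $|\proj W_i|\leq|\proj(V/\F^k)|$), yielding exactly the stated ceiling; sufficiency is produced by explicit projective coverings such as, when $\dim V\geq 2k$, a classical spread of a $2k$-dimensional subspace $V_0\subset V$ into $q^k+1$ many $k$-dimensional summands extended by a fixed complement of $V_0$. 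The case $|\F|=\dim V=\infty$ is easy: a finite cover is ruled out by the classical fact recalled in the introduction, while picking a countable linearly independent $\{v_n\}\subset V$, extending to a basis, and taking $W_n$ to be the span of the basis with $\{v_n,\ldots,v_{n+k-1}\}$ removed produces a countable cover, since every vector has finite support in the basis.

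For the ``otherwise'' branch I handle the two subcases separately. When $|\F|=q<\infty$ and $\dim V=\infty$, the spread construction above (inside a $2k$-dimensional subspace $V_0\subset V$, extended by a complement of $V_0$) yields a cover by $q^k+1$ codim-$k$ subspaces; necessity reduces to the finite case by first enlarging each $W_i$ to codimension exactly $k$ and passing to the finite-dimensional quotient $V/U$ with $U:=\bigcap_iW_i$ (codimension $\leq k|I|$ in $V$), where case~(a) yields $|I|\geq\lceil(q^{n'}-1)/(q^{n'-k}-1)\rceil\geq q^k+1$ for $n':=\dim V/U>k$. When $|\F|=\infty$ and $\dim V=n<\infty$, sufficiency comes from covering a chosen $\F^{k+1}$-summand of $V$ by all $|\F|+1=|\F|$ of its lines through the origin and extending each line by a fixed complement in $V$.

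The delicate point---what I expect to be the main obstacle---is necessity in this last subcase: showing that for $\F$ infinite and $V=\F^n$ finite-dimensional, $V$ is not the union of fewer than $|\F|$ proper subspaces, from which the bound $|I|\geq|\F|=|\F|^k+1$ follows since codim $\geq k$ implies proper. I would argue by induction on $n$. The base $n=2$ is immediate: $\proj(\F^2)$ has $|\F|+1$ projective points and each proper subspace of $\F^2$ accounts for at most one, so $|I|<|\F|$ does not cover. For the inductive step, fix $e_1\neq 0$; if some 2-plane $P\ni e_1$ lies in no $W_i$, choose $e_2\in P\setminus\F e_1$ and note that on the pencil $\{e_1+\lambda e_2:\lambda\in\F\}\subset P$ each $W_i$ catches at most one value of $\lambda$, so $|I|<|\F|$ leaves an uncovered $\lambda$; otherwise every 2-plane through $e_1$ lies in some $W_i\ni e_1$, whence the cover restricts to a cover of $V/\F e_1$ by $|I|$ proper subspaces, and the inductive hypothesis applies.
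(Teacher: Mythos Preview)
Your overall structure and most of the arguments are sound; in particular, your necessity arguments (the projective count for finite $V$, the reduction to a finite-dimensional quotient when $\F$ is finite and $\dim V=\infty$, and your dimension-induction for $\F$ infinite and $\dim V<\infty$) are correct and in places more direct than the paper's. The paper proves the last of these by first pruning to an irredundant cover (using the induction hypothesis to show no $V_j$ lies in the union of the rest) and then running the pencil argument once; your dichotomy ``some $2$-plane through $e_1$ lies in no $W_i$'' versus ``every such plane lies in some $W_i$'' is a legitimate alternative, though in the second branch you should say explicitly that the $W_i$ containing $e_1$ already cover $V$, so that the quotient cover consists of genuinely proper subspaces.

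There is, however, a real gap in sufficiency for the finite case. Your spread construction inside a $2k$-dimensional summand produces $q^k+1$ codimension-$k$ subspaces, and this is exactly $\nu(\F,V,k)$ \emph{only when} $n=\dim V\ge 2k$. When $k<n<2k$ (equivalently $n-k<k$), one has
\[
\nu=\Big\lceil\frac{q^n-1}{q^{n-k}-1}\Big\rceil
= q^k + q^{2k-n}\cdot\frac{q^{n-k}-1}{q^{n-k}-1}+\cdots
\]
which is typically much larger than $q^k+1$; e.g.\ for $n=41$, $k=29$ the target is $q^{29}+q^{17}+q^{5}+1$. No $2k$-dimensional subspace exists in $V$, so your construction does not apply, and you give no alternative. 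This is precisely the delicate case: the paper handles it by invoking Bu's partition results, repeatedly splitting off $q^{n-d},q^{n-2d},\dots$ subspaces of dimension $d=n-k$ until a residual piece of dimension between $d$ and $2d$ remains, and then covering that piece via a lift of a spread in a suitable quotient. You need either to supply this construction (or an equivalent one) or to restrict your claim in the finite case to $n\ge 2k$.
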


\noindent (We will see in the proof, that the countable cover in the case
of $\dim_\F V = |\F| = \infty$ is different in spirit from the
constructions in the other cases.)
We now mention a few examples and variants, before proving the theorem in
general.
\begin{enumerate}
\item No finite-dimensional vector space over $\mathbb{R}$ (hence, also
over $\mathbb{C}$) is a union of countably many proper subspaces. Here is
a simple measure-theoretic proof by S.~Chebolu: suppose $V = \bigcup_{n >
0} V_n$, with $V_n \subsetneq V\ \forall n \in \N$. Let $\mu$ be the
Lebesgue measure on $V$; recall that $\mu$ is countably subadditive. We
now get a contradiction:
\[ \mu(V) = \mu \left( \bigcup_n V_n \right) \leq \sum_{n \in \N}
\mu(V_n) = 0, \]

\noindent since each $V_n$ has measure zero, being a proper
subspace.\medskip

\item On the other hand, suppose $V$ is a finite-dimensional vector space
over a finite field $\F = \F_q$ (with $q$ elements); how many proper
subspaces would cover it? (Hence, $k=1$ here.) The answer is the same for
all $V$; we mention a proof (by R.~Walia; see \cite{Luh}) for the
simplest example of $V_2 = \F_q^2$.\medskip

\begin{lemma}\label{Llines}
$V_2$ is a union of $q+1$ lines (but not $q$ lines).
\end{lemma}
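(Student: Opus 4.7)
The plan is a direct counting argument based on the three elementary observations that every line (i.e.\ $1$-dimensional subspace) of $V_2 = \F_q^2$ passes through $0$, that each such line contains exactly $q$ vectors, and that two distinct lines meet only at $0$. Together these imply that the union of any $n$ pairwise distinct lines in $V_2$ has cardinality exactly $1 + n(q-1)$. This single identity will drive both halves of the lemma.

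For the upper bound, I would count the total number of lines in $V_2$: since $|\proj(V_2)| = (q^2-1)/(q-1) = q+1$, there are exactly $q+1$ distinct lines. Taking their union gives a set of size $1 + (q+1)(q-1) = q^2 = |V_2|$, so this union is all of $V_2$. For the lower bound, suppose $V_2 = L_1 \cup \cdots \cup L_n$ with the $L_i$ distinct lines; by the cardinality formula,
\[ q^2 = |V_2| \;\leq\; 1 + n(q-1), \]
which forces $n \geq (q^2-1)/(q-1) = q+1$, so no $q$ lines can cover $V_2$.

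There is no real obstacle here: the entire argument is a counting identity, and the only ingredient beyond arithmetic is that distinct lines in $V_2$ intersect trivially, which is immediate from the fact that a proper subspace of a $2$-dimensional space has dimension at most $1$. The same method will presumably generalize to arbitrary finite $V$ and codimension $k$ once one replaces ``line'' by ``subspace of codimension $k$'' and $\proj(V_2)$ by $|\proj(V)|/|\proj(V/\F^k)|$, matching the first case of $\nu(\F,V,k)$ in Theorem~\ref{Tvecunion}.
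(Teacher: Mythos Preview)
Your proof is correct and essentially identical to the paper's. The paper lists the $q+1$ lines explicitly as those spanned by $(1,\alpha)$ for $\alpha \in \F_q$ and by $(0,1)$, and then uses the same counting identity $1 + (q+1)(q-1) = q^2$ for both directions; you instead count the lines via $|\proj(V_2)| = (q^2-1)/(q-1)$, but since those explicit $q+1$ lines are exactly all the lines of $V_2$, the two arguments coincide.
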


\begin{proof}
Consider the lines spanned by $(1, \alpha)$ (for each $\alpha \in \F_q$)
and $(0,1)$. These are $q+1$ lines, and each pair of lines has only the
origin in common (since two points determine a line). Since each line has
$q$ points, the union of all these lines has size $1 + (q+1)(q-1) = q^2$
(where the ``1"counts the origin). This counting argument also shows that
a smaller number of lines can not cover all of $V_2$.
\end{proof}

\begin{remark}
Thus, we should really think of $q+1$ as $\F \coprod \{ \infty \} =
\proj(\F^2)$.
\end{remark}

\item Note that the proof of Lemma \ref{Llines} shows that the $q+1$
lines actually provide a {\it partition} of the finite vector space $V_2$
- namely, a set of subspaces that are pairwise disjoint except for the
origin, and cover all of $V$.

The theory of partitions of finite vector spaces has been extensively
studied - see, for instance, \cite{Beu, Bu, ZSSSE, Hed1, Hed2}. We remark
that this theory of partitions keeps track of the dimensions of the
subspaces involved. Moreover, it has applications in error-correcting
codes and combinatorial designs - see \cite[\S 1]{ZSSSE} for more
references.

\item There is a school of thought that considers vector spaces over
``$\F_1$ (the field with one element)", to morally be defined - and more
precisely, they are finite sets. The way to get results using this
philosophy, is to work the analogous results out for finite fields
$\F_q$, and take $q \to 1^+$ (though it is a non-rigorous procedure,
given that there usually is more than one generalization to $\F_q$).

As for our two problems, the results are clear: a set of size $>1$
(which is analogous to $\dim_{\F_q}(V) > 1$) is a union of two proper
subsets - where $2 = 1+1 = q+1$ - but not of one proper subset. The
analogue for codimension $k$ subspaces, is: how many subsets $W \subset
V$ with $|V \setminus W| \geq k$, does it take to cover $V$?

The answer to this question is 2 if $V$ is infinite, and if $|V| = n$,
then the answer is $\displaystyle \Big\lceil \frac{n}{n-k} \Big\rceil$.
Note that this is exactly the statement of Theorem \ref{Tvecunion} for
finite vector spaces $V$ in both cases, because $\proj(V) / \proj(V /
\F_q^k) = (q^n - 1) / (q^{n-k}-1)$, and for $0<k<n$,
\[ \lim_{q \to 1^+} \Big\lceil \frac{q^n - 1}{q^{n-k} - 1} \Big\rceil =
\Big\lceil \frac{n}{n-k} \Big\rceil. \]

\item The next variant involves modules over a finite-dimensional
$\F$-algebra $A$, when $\F$ is infinite. It generalizes Theorem
\ref{Tvecunion} when $k=1$.

\begin{prop}\label{Palg}
Suppose $\dim_{\F} A < \infty = |\F|$. Now if $M = \bigoplus_{i \in I} A
m_i$ is any direct sum of cyclic $A$-modules, then $M$ is a union of
``$J$-many" proper submodules if and only if it $M$ is not cyclic and $J
\geq \nu(\F,M,1)$.
\end{prop}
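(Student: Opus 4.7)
The plan is to handle the ``only if'' direction by passing to the underlying $\F$-vector space, and the ``if'' direction by an explicit construction that splits on whether $\dim_\F M$ is finite. For necessity, every proper $A$-submodule of $M$ is in particular a proper $\F$-subspace, so Theorem~\ref{Tvecunion} with $k = 1$ immediately forces $J \geq \nu(\F, M, 1)$; moreover, $M$ cannot be cyclic, since in any cyclic module $Am$ the generator $m$ lies in no proper submodule.

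For sufficiency, after discarding indices with $m_i = 0$ I may assume every $m_i \neq 0$; because $\dim_\F A < \infty$, this makes $\dim_\F M$ infinite exactly when $|I|$ is infinite. If $\dim_\F M < \infty$ then $\nu(\F, M, 1) = \F \coprod \{\infty\}$ has cardinality $|\F|$, and since $\F$ is infinite and $\dim_\F M$ is a finite positive integer, $|M| = |\F|$. Because $M$ is not cyclic, every cyclic submodule $A m$ is proper, so the family $\{A m : m \in M\}$ covers $M$ (as $m \in Am$) by at most $|M| = |\F|$ proper submodules, which can be reindexed by $\nu(\F, M, 1)$ with repetition. If instead $\dim_\F M = \infty$, I pick any countable sequence $i_1, i_2, \ldots \in I$ and set $N_n := \bigoplus_{i \neq i_n} A m_i$; each is a proper submodule (it omits $m_{i_n}$), and every $m \in M$ has finite support in the direct sum and so lies in $N_n$ for some $i_n$ outside that support, giving a cover by countably many proper submodules.

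The main obstacle is the infinite-dimensional case, where one must produce only countably many covering submodules despite $|I|$ possibly being uncountable; this rules out brute-force cardinal counting and forces the use of the finite-support property of elements of a direct sum, mirroring the $\N$-cover construction in Theorem~\ref{Tvecunion}. The finite-dimensional case, by contrast, is controlled by the crude bound $|M| = |\F|$ together with the cyclic-submodule cover.
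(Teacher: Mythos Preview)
Your proof is correct and follows essentially the same strategy as the paper: reduce necessity to Theorem~\ref{Tvecunion} by viewing proper submodules as proper $\F$-subspaces, and for sufficiency split on whether $I$ (equivalently $\dim_\F M$) is finite---covering by the cyclic submodules $Am$---or infinite---exploiting the direct-sum structure to get a countable cover. The only cosmetic differences are that the paper builds its countable cover from an increasing chain $M_n = \bigoplus_{i \in I_n} A m_i$ with $I_n \nearrow I$ rather than your single-generator complements $N_n = \bigoplus_{i \neq i_n} A m_i$, and the paper reproves ``$Am$ is proper'' via an explicit $b_j a_i = \delta_{ij}$ argument instead of simply invoking the non-cyclic hypothesis as you do.
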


\noindent In other words, $M$ is a union of ``$\F$-many" proper
submodules if $I$ is finite (and not a singleton), and a countable union
if $I$ is infinite, since $\F$ and $\F \coprod \{ \infty \}$ are in
bijection if $\F$ is infinite.

\begin{proof}
If $M$ is cyclic, the result is clear, since some submodule must contain
the generator. So now assume that $M$ is not cyclic; note that each
cyclic $A$-module is a quotient of $A$, hence finite-dimensional. So if
$I$ is finite, then $\dim_\F M < \infty$, and every proper submodule is a
subspace of codimension between 1 and $\dim_\F M$. By Theorem
\ref{Tvecunion}, we then need at least ``$\F$-many" proper submodules to
cover $M$.

On the other hand, $|M| = |\F|$, and for each $m \in M$, we have the
submodule $Am$ containing it. The result follows (for finite $I$) if we
can show that $Am$ is a proper submodule for all $m \in M$. But if $Am =
M$ for some $m = \sum_{i \in I} a_i m_i$, then we can find $b_j \in A$
such that $b_j m = m_j\ \forall j \in I$, whence $b_j a_i = \delta_{ij}\
\forall i,j$.
This is a contradiction if $|I|>1$, since it implies that every $a_i \in
A^\times$ is a unit, hence annihilated only by $0 \in A$.

On the other hand, if $I$ is infinite, then for any sequence of subsets
\[ \emptyset = I_0 \subset I_1 \subset I_2 \subset \dots \subset I,
\qquad \bigcup_{n \in \N} I_n = I, \qquad I_n \neq I\ \forall n, \]

\noindent define the proper $A$-submodule $M_n := \bigoplus_{i \in I_n} A
m_i$. Then $M = \bigcup_{n \in \N} M_n$ yields a countable cover by
proper $A$-submodules. Evidently, finitely many proper submodules cannot
cover $M$ (again by Theorem \ref{Tvecunion}), since $\F$ is infinite, and
each submodule is a subspace as well.
\end{proof}

\item The last variant that we mention, is the following question, which
generalizes Theorem \ref{Tvecunion} for finite $\F$:\medskip

{\it Given a finitely generated abelian group $G$, how many proper
subgroups are needed to cover it?}\medskip

For instance, if $G = (\Z / 5 \Z) \oplus (\Z / 25 \Z) \oplus (\Z / 12
\Z)$, then $G$ has a quotient: $G \to (\Z / 5 \Z)^2 = \F_5^2 \to 0$. Now
to cover $G$ by proper subgroups, we can cover $\F_5^2$ by six lines (by
Theorem \ref{Tvecunion}), and lift them to a cover of $G$. Moreover, it
can be shown that $G$ cannot be covered by five or fewer proper
subgroups.

Thus, the question for abelian groups seems to be related to the question
for fields. We explore this connection in a later work \cite{Khchinese}.
\end{enumerate}

\section{Proof for infinite fields}

In this section, we show Theorem \ref{Tvecunion} for infinite fields (and
some other cases too). Define {\it projective $k$-space} $\F P^k :=
\proj(\F^{k+1})$.

\begin{remark}\label{R2}\hfill
\begin{enumerate}
\item In what follows, we freely interchange the use of (cardinal)
numbers and sets while comparing them by inequalities. For instance, $I
\geq A/B$ and $I \geq n$ mean, respectively, that $I \times B \geq A$ and
$I \geq \{ 1, 2, \dots, n \}$. Similarly, $\dim_\F V$ may denote any
basis of $V$ - or merely its cardinality.

We also write $\cong$ below, for bijections between sets (in other
contexts and later sections, $\cong$ may also denote bijections of
$\F$-vector spaces).

\item $\F P^k$ is parametrized by the following lines:
\[ (1, \alpha_1, \dots, \alpha_k);\ (0, 1, \alpha_2, \dots, \alpha_k);\
\dots;\ (0,0, \dots, 0, 1), \]

\noindent where all $\alpha_i$ are in $\F$. If $\F$ is infinite, then
this is in bijection with each of the following sets: $\F, \F^k, \F
\coprod \{ \infty \}, \F^k \coprod \{ \infty \}$.
\end{enumerate}
\end{remark}\medskip

We now show a series of results, that prove the theorem when $\F$ is
infinite.

\begin{lemma}
($\F, V, k, I$ as above.) If $I \geq \F P^k$, then $V$ is a union of
``$I$-many" proper subspaces of codimension at least $k$, if and only if
$\dim_\F V > k$.
\end{lemma}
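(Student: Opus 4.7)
The plan is to establish both directions separately, with the heart of the argument being a concrete construction of a small cover using $\F P^k$-many codimension-$k$ subspaces.

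For the \emph{only if} direction, I would argue by contrapositive: if $\dim_\F V \leq k$, then any subspace $W \subseteq V$ of codimension at least $k$ must satisfy $\dim_\F W \leq 0$, so $W = \{0\}$, and no family of such subspaces can cover $V$. (If $V = \{0\}$, then $V$ has no proper subspaces at all; if $V \neq \{0\}$, then $\{0\}$ misses any nonzero vector.)

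For the \emph{if} direction, assume $\dim_\F V > k$ and choose a $(k+1)$-dimensional subspace $W \subseteq V$ together with a complement $U$, so that $V = W \oplus U$. For each line $L \in \proj(W)$ I set $V_L := L \oplus U$. Then each $V_L$ has codimension exactly $k$ in $V$ (hence is proper since $k \geq 1$), and every $v = w + u \in V$ with $w \in W,\ u \in U$ lies in $V_L$ for $L = \F w$ when $w \neq 0$, and in any fixed $V_L$ when $w = 0$. Thus $V = \bigcup_{L \in \proj(W)} V_L$ is a cover by $|\proj(W)| = |\F P^k|$ proper subspaces of codimension $k$. The hypothesis $I \geq \F P^k$ supplies an injection $\iota : \proj(W) \to I$; reindexing our cover along $\iota$, and filling the remaining indices of $I$ with (say) a single repeated $V_L$, produces the desired cover by an $I$-indexed family of proper subspaces of codimension at least $k$.

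The main obstacle is essentially bookkeeping: ensuring the complement $U$ exists when $\dim_\F V$ is infinite (standard via basis extension / Zorn's lemma), and being careful that ``$I$-many'' really means an $I$-indexed family, so that padding with repeats against a larger $I$ is legitimate. Once those conventions are in place, the argument reduces to a short dimension count together with the observation that every nonzero vector of $W$ lies on a unique line in $\proj(W)$.
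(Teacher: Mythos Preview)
Your proof is correct and follows essentially the same approach as the paper: the paper fixes basis vectors $v_0,\dots,v_k$ with complementary span $B'$, and for each line $x \in \F P^k$ (written in echelon coordinates) takes the subspace spanned by $B'$ together with a generator of that line---which is precisely your $V_L = L \oplus U$ with $W = \operatorname{span}(v_0,\dots,v_k)$ and $U = \operatorname{span}(B')$. Your coordinate-free phrasing is arguably cleaner, but the construction and verification are the same.
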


\begin{proof}
The result is trivial if $\dim_\F V \leq k$, and if not, then we start by
fixing any $\F$-basis $B$ of $V$. Fix $v_0, v_1, \dots, v_k \in B$, and
call the complement $B'$. Now define, for each $1 \leq i \leq k$ and each
$x = (0, \dots, 0, 1, \alpha_i, \alpha_{i+1}, \dots, \alpha_k) \in \F
P^k$, the codimension $k$-subspace $V_x$ of $V$, spanned by $B'$ and
$v_{i-1} + \sum_{j=i}^k \alpha_j v_j$.

We claim that $V = \bigcup_{x \in \F P^k} V_x$. Indeed, any $v \in V$ is
of the form $v' + \sum_{j=0}^k \beta_j v_j$, with $\beta_j \in \F\
\forall j$, and $v'$ in the span of $B'$. Now if $\beta_i$ is the first
nonzero coefficient, then $v \in V_x$, where $x = (0, \dots, 0, 1,
\beta_i^{-1} \beta_{i+1}, \dots, \beta_i^{-1} \beta_k)$, with the $1$ in
the $i$th coordinate.
\end{proof}

\begin{prop}\label{P1}
Suppose $I < \F^k \coprod \{ \infty \}$. If $I$ or $\dim_\F V$ is finite,
then $V$ cannot be written as a union of ``$I$-many" subspaces of
codimension $\geq k$.
\end{prop}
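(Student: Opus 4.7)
The plan is to reduce to the case $\dim_\F V < \infty$ and then handle the two field-cardinality cases by a counting argument (finite $\F$) and by induction on $\dim_\F V$ (infinite $\F$). Suppose for contradiction that $V = \bigcup_{i \in I} V_i$ with each $V_i$ of codimension at least $k$. If $\dim_\F V$ is already finite, skip straight to the next paragraph; otherwise the hypothesis forces $|I|$ finite. Enlarge each $V_i$ to a subspace $V_i'$ of codimension exactly $k$ (possible since $\mathrm{codim}\,V_i \geq k$); the enlarged family still covers $V$. Setting $U := \bigcap_{i \in I} V_i'$, the natural injection $V/U \hookrightarrow \bigoplus_{i \in I} V/V_i'$ gives $\dim_\F V/U \leq k|I| < \infty$, and the family $\{V_i'/U\}_{i \in I}$ still covers $V/U$ with each member of codimension $k$. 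Replacing $V$ by $V/U$, I assume $\dim_\F V = n < \infty$, necessarily $\geq k+1$.

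\emph{Finite field case.} If $|\F| = q$ then the hypothesis reads $|I| \leq q^k$. Each $V_i$ has at most $q^{n-k}$ elements, so
\[ \Big|\bigcup_{i \in I} V_i\Big| \leq 1 + |I|(q^{n-k} - 1) \leq 1 + q^k(q^{n-k} - 1) = q^n - q^k + 1 < q^n = |V|, \]
contradicting $V = \bigcup_i V_i$.

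\emph{Infinite field case.} Now $|I| < |\F|$. Enlarge each $V_i$ to a hyperplane $H_i \supset V_i$; it suffices to show $V \neq \bigcup_i H_i$. I induct on $n \geq 2$. For $n = 2$, the lines in $V$ form a set of cardinality $|\proj(\F^2)| = |\F|$, so since $|I| < |\F|$ some line $L$ is not among the $H_i$, and then $L \setminus \{0\}$ lies in $V \setminus \bigcup_i H_i$ (distinct lines through $0$ meet only at $0$). For $n \geq 3$, the hyperplanes of $V$ likewise form a set of cardinality $|\F|$, so choose a hyperplane $H$ distinct from every $H_i$. For each $i$, $H + H_i = V$ forces $\dim(H \cap H_i) = n-2$, so $H \cap H_i$ is a hyperplane of $H$; applying the inductive hypothesis to $H$ with the $|I| < |\F|$ hyperplanes $H \cap H_i$ produces a point of $H \setminus \bigcup_i (H \cap H_i) \subseteq V \setminus \bigcup_i H_i$.

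The main obstacle is the reduction to finite dimension: the hypothesis ``$I$ or $\dim_\F V$ finite'' is used essentially there, because without finite $|I|$ the space $V/U$ need not be finite-dimensional and neither subsequent argument can begin. The remaining work is otherwise routine, though some care is needed with cardinal arithmetic to confirm that $|\proj(\F^m)| = |\F|$ for $\F$ infinite and $m \geq 2$, so that $|I|$ is strictly less than the number of available lines (respectively hyperplanes) at each step of the induction.
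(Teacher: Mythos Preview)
Your proof is correct. The reduction to finite dimension and the finite-field counting are essentially the same as in the paper. The genuine difference is in the infinite-field case. The paper first treats $k=1$ directly (for either $I$ finite or $\dim_\F V$ finite) by passing to an \emph{irredundant} subcover---one in which no $V_j$ lies in the union of the others---and then running the classical affine-line trick: pick $v_1\in V_{i_1}\setminus\bigcup_{j\neq i_1}V_j$ and $v_2\in V_{i_2}\setminus\bigcup_{j\neq i_2}V_j$, and map $\F\coprod\{\infty\}\to I'$ by sending $\alpha\mapsto$ some $i$ with $v_1+\alpha v_2\in V_i$; non-injectivity forces $v_1,v_2$ into a common $V_i$, a contradiction. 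General $k$ then follows because $\F^k\coprod\{\infty\}$ and $\F\coprod\{\infty\}$ have the same cardinality when $\F$ is infinite. Your route instead enlarges to hyperplanes and inducts on $\dim_\F V$ by restricting to a hyperplane $H$ not on the list, which is a clean geometric descent. What the paper's argument buys is that when $I$ is finite it works \emph{without} first cutting down to finite dimension (the irredundant subcover is found by simply discarding redundant members), whereas you always pass through the quotient $V/U$; what your argument buys is that it avoids the somewhat delicate construction of an irredundant subcover in the infinite-$I$, finite-dimension case (where the paper needs the induction hypothesis just to manufacture irredundancy).
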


\begin{proof}
This proof is long - and hence divided into steps.
\begin{enumerate}
\item The first step is to show it for $k=1$. Suppose we are given $V$
and $\{ V_i : i \in I \}$. Suppose the result fails and we do have $V =
\bigcup_{i \in I} V_i$. We then seek a contradiction.
\begin{enumerate}
\item We first find a subcollection $\{ V_i : i \in I' \subset I \}$ of
subspaces that cover $V$, such that no $V_i$ is in the union of the
rest.\medskip

If $I$ is finite, this is easy: either the condition holds, or there is
some $V_i$ that is contained in the union of the others; now remove it
and proceed by induction on $|I|$.\medskip

The case of finite-dimensional $V$ is from \cite{Ani}. We need to use
induction on $d = \dim_\F V$ to prove the result. It clearly holds if $V
= \F^1$; now suppose that it holds for all $d < \dim_\F V$. We first
reduce our collection $\{ V_i : i \in I \}$ to a subcollection indexed by
$I' \subset I$, say, as follows:

Every chain of proper subspaces of $V$ is finite (since $\dim_\F V <
\infty$), whence its upper bound is in the chain (note that this fails if
$|I| = \dim_\F V = \infty$). So for every chain of subspaces, remove all
of them except the upper bound.

We are left with $\{ V_i : i \in I' \}$, where if $i \neq j$ in $I'$,
then $V_j \nsubseteq V_i$, or $V_i \cap V_j \subsetneq V_j$. Now use the
induction hypothesis: no $V_j$ is a union of ``$I$-many" (hence
``$I'$-many") proper subspaces. So
\[ V_j \supsetneq \bigcup_{i \in I', i \neq j} (V_j \cap V_i) = V_j \bigcap
\bigcup_{i \in I', i \neq j} V_i, \]

\noindent whence no $V_j$ is contained in the union of the others, as
desired.\medskip

\item Having found such a subcollection, we now obtain the desired
contradiction:

For all $i \in I'$, choose $v_i \in V_i$ such that $v_i \notin V_j\
\forall i \neq j$. There are at least two such, so choose $v_1 = v_{i_1},
v_2 = v_{i_2}$, with $i_1 \neq i_2$ in $I'$. Now consider $S := \{ v_1 +
\alpha v_2 : \alpha \in \F \} \coprod \{ v_2 \}$. Since $V = \bigcup_{i
\in I'} V_i$, for each vector $v \in S$, choose some $i$ such that $v \in
V_i$. This defines a function $f : \F \coprod \{ \infty \} \to I'$, and
this is not injective by assumption. Thus some two elements of $S$ are in
the same $V_i$, and we can solve this system of linear equations to infer
that both $v_1$ and $v_2$ are in $V_i$. Hence $i_1 = i = i_2$, a
contradiction.
\end{enumerate}\medskip

\item We now show the result for general $k$. We have two cases. If $\F$
is infinite, then we are done by the previous part and the final part of
Remark \ref{R2}.
The other case is when $\F$ is finite - say $\F = \F_q$ - whence $I$ is
finite. In this case, take any set of subspaces $V_1, \dots, V_i$ of
codimension $\geq k$, with $i = |I| < q^k + 1$; we are to show that
$\bigcup_j V_j \subsetneq V$.

We now reduce the situation to that of a finite-dimensional quotient $V'$
of $V$ as follows. First, we may increase each $V_i$ to a codimension $k$
subspace. Next,
\begin{equation}\label{E2}
\dim_\F \left( V_1 / (V_1 \cap V_2) \right) = \dim_\F \left( (V_1 +
V_2) / V_2 \right) \leq \dim_\F (V / V_2) < \infty,
\end{equation}

\noindent whence $\dim_\F V / (V_1 \cap V_2) \leq \dim_\F (V / V_1) +
\dim_\F (V / V_2) < \infty$. Now proceed inductively to show that $V_0 :=
\bigcap_{j=1}^i V_j$ has finite codimension in $V$; more precisely,
$\dim_\F (V / V_0)$ is bounded above by $\sum_{j=1}^i \dim_\F (V / V_j)$.

Thus, we quotient by $V_0$, and end up with codimension-$k$ subspaces
$V'_j$ covering the finite-dimensional quotient $V' = V / V_0$. Now if
$\dim_\F V' = n$, then we are covering $q^n - 1$ nonzero vectors in $V'$
by proper subspaces $V'_j$, each with at most $q^{n-k} - 1$ nonzero
vectors. Thus the number of subspaces needed, is at least $\geq \frac{q^n
- 1}{q^{n-k} - 1} > q^k$, as claimed.
\end{enumerate}
\end{proof}

The following result concludes the proof for infinite fields, by the last
part of Remark \ref{R2}.

\begin{lemma}
If $\F$ and $\dim_\F V$ are both infinite, then $V$ is a countable union
of proper subspaces.
\end{lemma}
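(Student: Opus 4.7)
The plan is to exhibit an explicit countable cover, using only the hypothesis that $\dim_\F V$ is infinite. (The hypothesis $|\F| = \infty$ plays no role in the construction itself; it just places us in the ``$\N$'' branch of the formula for $\nu(\F,V,k)$.) First, fix an $\F$-basis $B$ of $V$. Since $B$ is infinite, I can extract from it a countably infinite sequence of distinct basis vectors $b_1, b_2, b_3, \ldots$. For each $n \in \N$, let $V_n$ denote the $\F$-span of $B \setminus \{b_n\}$; then every $V_n$ is a codimension-$1$ subspace, hence a proper subspace of $V$.

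What remains is to check that $V = \bigcup_{n \in \N} V_n$. The key point is that every vector $v \in V$ has finite support in the basis $B$: writing $v = \sum_{b \in B} c_b\, b$ uniquely, only finitely many $c_b$ are nonzero. In particular, the set of indices $\{ n \in \N : c_{b_n} \neq 0 \}$ is finite, so for all but finitely many $n$ we have $c_{b_n} = 0$, i.e., $v \in V_n$. This produces the desired countable cover.

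There is no real obstacle here; the entire argument reduces to the fact that basis expansions are finite sums. If one instead wants to match the codimension-$k$ requirement of Theorem \ref{Tvecunion} directly, one should extract a countable family of disjoint $k$-element blocks $\{b_{n,1}, \dots, b_{n,k}\} \subset B$ and take $V_n$ to be the span of $B \setminus \{b_{n,1}, \dots, b_{n,k}\}$; the same finite-support argument then yields a countable cover by codimension-$k$ subspaces, which together with Proposition \ref{P1} (ruling out finite covers) completes the proof of Theorem \ref{Tvecunion} in the case $|\F| = \dim_\F V = \infty$.
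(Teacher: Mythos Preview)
Your proof is correct and follows essentially the same approach as the paper: fix a basis $B$ and exploit the fact that every vector has finite support in $B$. The only cosmetic difference is that the paper takes an increasing chain $\emptyset = B_0 \subset B_1 \subset \cdots$ of proper subsets with $\bigcup_n B_n = B$ and sets $V_n = \operatorname{span}(B_n)$ (so each $V_n$ has infinite codimension, handling all $k$ at once), whereas you delete single basis vectors to get codimension-$1$ subspaces and then indicate the obvious modification for codimension $k$.
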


\begin{proof}
(As for Proposition \ref{Palg}.) Fix any (infinite) basis $B$ of $V$, and
a sequence of proper subsets
$\emptyset = B_0 \subset B_1 \subset \dots$ of $B$, whose union is $B$.
Now define $V_n$ to be the span of $B_n$ for all $n$. Then the $V_n$'s
provide a cover of $V$ by proper subspaces, each of infinite codimension
in $V$.
\end{proof}

\section{Proof for finite fields}

We now complete the proof. In what follows, we will crucially use some
well-known results on partitions of finite vector spaces. These are found
in \cite[Lemmas 2,4]{Bu}, though the first part below was known even
before \cite{Beu}).

\begin{lemma}\label{L1}
Suppose $V$ is an $n$-dimensional vector space over the finite field $\F
= \F_q$ (for some $q,n \in \N$), and we also fix $d \in \N$.
\begin{enumerate}
\item $V$ can be partitioned using only $d$-dimensional subspaces, if and
only if $d | n$. (The number of such subspaces is $(q^n - 1) / (q^d -
1)$.)

\item Let $1 < d < n/2$. Then $V$ can be partitioned into one
$(n-d)$-dimensional subspace, and $q^{n-d}$ subspaces of dimension $d$.
\end{enumerate}
\end{lemma}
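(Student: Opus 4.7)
I will prove parts (1) and (2) separately. Both arguments rest on restriction of scalars from a suitable extension of $\F_q$.

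\textbf{Part (1).} For necessity, a partition of $V$ into $N$ subspaces of dimension $d$ accounts for each nonzero vector of $V$ exactly once, so $N(q^d - 1) = q^n - 1$. Thus $(q^d - 1) \mid (q^n - 1)$; writing $n = md + r$ with $0 \leq r < d$ and reducing modulo $q^d - 1$ (using $q^d \equiv 1$) gives $q^n - 1 \equiv q^r - 1 \pmod{q^d - 1}$, which forces $r = 0$. For sufficiency, if $d \mid n$, equip $V$ with an $\F_{q^d}$-vector space structure extending its $\F_q$-structure, so that $V \cong \F_{q^d}^{n/d}$. The $1$-dimensional $\F_{q^d}$-subspaces of $V$ are $d$-dimensional $\F_q$-subspaces; any two distinct such subspaces meet only at $0$ (since $\F_{q^d}$ is a field), and together they cover $V \setminus \{0\}$. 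The number of such lines is $(q^n - 1)/(q^d - 1)$.

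\textbf{Part (2).} Fix a decomposition $V = U_0 \oplus W$ with $\dim U_0 = d$ and $\dim W = n - d$, together with an $\F_q$-linear isomorphism $W \cong \F_{q^{n-d}}$ of $\F_q$-vector spaces. Since $d < n - d$, fix also an $\F_q$-linear injection $\iota : U_0 \hookrightarrow W$. For each $\alpha \in W \cong \F_{q^{n-d}}$, define
\[ U_\alpha := \{ (u, \, \alpha \cdot \iota(u)) : u \in U_0 \} \subset V, \]
where the product is taken in the field $\F_{q^{n-d}}$. I verify three things: (i) each $U_\alpha$ is a $d$-dimensional complement of $W$, because $u \mapsto (u, \alpha \iota(u))$ is an $\F_q$-linear injection projecting to the identity on $V / W \cong U_0$; (ii) $U_\alpha \cap U_\beta = \{0\}$ for $\alpha \neq \beta$, since $(u, \alpha \iota(u)) = (u', \beta \iota(u'))$ forces $u = u'$ and $(\alpha - \beta) \iota(u) = 0$, hence $\iota(u) = 0$ in the field $\F_{q^{n-d}}$ and then $u = 0$ by injectivity of $\iota$; (iii) any $v = (u, w) \in V$ lies in $W$ (if $u = 0$) or in $U_\alpha$ for $\alpha = w \cdot \iota(u)^{-1}$ (if $u \neq 0$). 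Together, $W$ and the $|W| = q^{n-d}$ subspaces $U_\alpha$ partition $V$ as required.

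\textbf{Main obstacle.} Part (1) is essentially a counting argument plus the scalar-restriction from $\F_{q^d}$ to $\F_q$, and needs nothing clever. The crux of the lemma is (2): the nonobvious step is to transport the multiplicative structure of $\F_{q^{n-d}}$ onto $W$, because it is precisely this field action that produces $q^{n-d}$ pairwise transversal complements of $W$ -- essentially an instance of a Gabidulin (maximum-rank-distance) code construction. Once the right action is in place, the three verifications are routine, and the hypothesis $d < n/2$ is used only to guarantee the existence of the injection $\iota : U_0 \hookrightarrow W$.
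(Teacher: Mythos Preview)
Your proof is correct. One small remark: the paper does not actually prove this lemma---it quotes it as a known result from Bu \cite[Lemmas 2,4]{Bu} (with part (1) attributed even earlier)---so there is no ``paper's own proof'' to compare against. Your argument for (1) via restriction of scalars from $\F_{q^d}$ is the standard one, and your construction in (2) is a clean explicit version of the spread-type construction: identifying $W$ with the field $\F_{q^{n-d}}$ and using its multiplicative structure to shear the complement $U_0$ into $q^{n-d}$ pairwise transversal copies. The three verifications are all valid, and you correctly isolate where the hypothesis $d<n/2$ enters (existence of the injection $\iota$). The condition $d>1$ in the statement is not actually used in your argument, which is fine---your proof simply covers slightly more than is asked.
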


We now show most of the main result, for finite fields. 

\begin{prop}\label{Pq}
Suppose $V$ is a finite set. Then $V$ is covered by ``$I$-many" subspaces
of codimension at least $k$, if and only if $I \geq \proj(V) / (\proj(V /
\F^k))$.
\end{prop}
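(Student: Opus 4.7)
Since $V$ is finite, $V \cong \F_q^n$ with $q := |\F|$ and $n := \dim_\F V$, and the quantity $|\proj(V)|/|\proj(V/\F^k)|$ equals $(q^n-1)/(q^{n-k}-1)$. Write $d := n-k$ and $N := \lceil (q^n-1)/(q^d-1)\rceil$; the plan is to establish both that $I \geq N$ is necessary for such a cover and that it is sufficient. Necessity is already essentially contained in the final part of the proof of Proposition \ref{P1}: each codimension-$\geq k$ subspace of $V$ contains at most $q^d - 1$ nonzero vectors, so any family whose union contains the $q^n - 1$ nonzero vectors of $V$ must have cardinality at least $(q^n-1)/(q^d-1)$, hence at least its ceiling $N$.

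For sufficiency I construct $N$ subspaces of dimension $\leq d$ (equivalently codimension $\geq k$) covering $V$. Write $n = ad + r$ with $0 \leq r < d$. If $r = 0$ then $N = (q^n-1)/(q^d-1)$ is an integer, and Lemma \ref{L1}(1) partitions $V$ into exactly $N$ subspaces of dimension $d$. If $r > 0$ I induct on $a \geq 1$. In the base case $a = 1$ (so $d < n < 2d$), decompose $V = U_1 \oplus U_2$ with $\dim U_1 = d-r$ and $\dim U_2 = 2r$; since $r \mid 2r$, Lemma \ref{L1}(1) partitions $U_2$ into $q^r + 1$ subspaces $S_j$ of dimension $r$, and $V_j := U_1 \oplus S_j$ (of dimension $d$) then yields $N = q^r + 1$ subspaces covering $V$.

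For the inductive step $a \geq 2$, one checks that $1 < d < n/2$ (since $d > r \geq 1$ forces $d \geq 2$, and $n \geq 2d + r > 2d$), so Lemma \ref{L1}(2) partitions $V$ into $q^{n-d}$ subspaces of dimension $d$ together with one residual subspace $A$ of dimension $(a-1)d + r$. The inductive hypothesis applied to $A$ (with parameters $a-1$ and the same $r$) covers $A$ by $q^r (q^{(a-1)d}-1)/(q^d-1) + 1$ subspaces of dimension $\leq d$. A short telescoping using $q^{(a-1)d}(q^d-1) + q^{(a-1)d} - 1 = q^{ad} - 1$ then shows the aggregate count is $q^r(q^{ad}-1)/(q^d-1) + 1 = N$, and every piece has codimension $\geq k$ in $V$.

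The main obstacle I foresee is the bookkeeping: verifying that $1 < d < n/2$ persists under the shrinkage $n \mapsto n - d$ at each recursive step of Lemma \ref{L1}(2), that the recursion terminates cleanly at $a = 1$, and that the $q$-geometric identities telescope to precisely $N$. Once these details are in place, the construction is a clean ``one application of Lemma \ref{L1}(2) plus recursion on the residual $A$'', bottoming out at the direct decomposition used in the base case.
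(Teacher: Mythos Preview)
Your proof is correct and follows essentially the same route as the paper's: both arguments use the counting lower bound for necessity, then for sufficiency iterate Lemma~\ref{L1}(2) to peel off $q^{n-d}$ subspaces of dimension $d$ until a residual of dimension $d+r$ remains, and finish by reducing that residual to a $2r$-dimensional space partitioned via Lemma~\ref{L1}(1). The only cosmetic difference is that the paper handles the residual by quotienting out a $(d-r)$-dimensional subspace and lifting, whereas you take a direct-sum complement $U_1\oplus U_2$; these are equivalent constructions.
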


\begin{proof}
If $V$ is finite, then so are $\F$ and $\dim_\F V$. We may also assume
that the subspaces that cover $V$ are of codimension exactly equal to
$k$. Now suppose $V$ is covered by ``$I$-many" such subspaces, and
$\dim_\F V = n \in \N$. Then we need to cover $q^n-1$ nonzero vectors by
proper subpaces, each with $q^{n-k}-1$ nonzero vectors, whence
\[ I \geq \frac{q^n - 1}{q^{n-k} - 1} = \frac{\proj(V)}{\proj(V / \F^k)},
\]

\noindent as required.

We now show the converse: if $\dim_\F V = n$, and $(n-k) | n$, then we
are done by the first part of Lemma \ref{L1}, since there exists a
partition.
In the other case, we illustrate the proof via an example that can easily
be made rigorous. We first fix $\F = \F_q$; now suppose $n=41$ and
$k=29$. We must, then, find $\lceil (q^{41} - 1)/(q^{12} - 1) \rceil =
q^{29} + q^{17} + q^5 + 1$ subspaces of codimension 29, that cover
$\F^{41}$.

Now set $d=12$ and apply the second part of Lemma \ref{L1}; thus,
\[ \F^{41} = \F^{29} \coprod (\F^{12})^{\coprod q^{29}}. \]

\noindent In other words, we have $q^{29}$ 12-dimensional subspaces, and
one extra subspace of dimension 29. Now apply the same result again (with
$d = 12$ and replacing $n=41$ by 29) to get
\[ \F^{41} = \F^{17} \coprod (\F^{12})^{\coprod q^{17}} \coprod
(\F^{12})^{\coprod q^{29}}. \]

\noindent (For a general $n,k$, apply the result repeatedly with $d =
n-k$ and $n$ replaced by $n-d, n-2d, \dots$, until there remains one
subspace of dimension between $d$ and $2d$, and ``almost disjoint"
subspaces of codimension $k$.)\medskip

To conclude the proof, it suffices to cover $V_1 = \F_q^{17}$ with $q^5 +
1$ subspaces of dimension 12. To do this, fix some 7-dimensional subspace
$V_0$ of $V_1$, and consider $V_1 / V_0 \cong \F_q^{10}$. By the first
part of Lemma \ref{L1}, this has a partition into $(q^5 + 1)$
5-dimensional subspaces. Lift this partition to $V_1$; this provides the
desired (remaining) $q^5 + 1$ subspaces of codimension $29$ in $\F^{41}$.
\end{proof}

The last part of the main result can now be shown, using this result.

\begin{proof}[Proof of Theorem \ref{Tvecunion}]
The above results show the theorem except in the case when $\F$ is
finite, but $\dim_\F V$ is not. In this case, by Proposition \ref{P1}, we
only need to show that $V$ can be covered by $q^k + 1$ subspaces of
codimension $k$. To see this, quotient $V$ by a codimension $2k$
subspace $V_0$; now by Proposition \ref{Pq}, $V / V_0$ can be covered by
\[ \frac{\proj(V / V_0)}{\proj((V / V_0) / \F^k)} = \frac{(q^{2k} - 1) /
(q-1)}{(q^k - 1) / (q-1)} = q^k + 1 \]

\noindent subspaces of codimension $k$. Lift these to $V$ for the desired
cover.
\end{proof}

\subsection*{Acknowledgments}
I thank Sunil Chebolu, Siddharth Joshi, Anindya Sen, and Rajeev Walia for
valuable discussions, and the anonymous referee for suggestions, which
helped in improving the exposition.

\medskip
\end{document}